\definecolor{inlink}{rgb}{0.1, 0, 0.8}
\definecolor{citelink}{rgb}{0, 0.4, 0.2}
\definecolor{outlink}{rgb}{0, 0.1, 0.7}
\apptocmd{\normalsize}{
  \setlength{\abovedisplayskip}{4pt plus 2pt minus 2pt}%
  \setlength{\belowdisplayskip}{4pt plus 2pt minus 2pt}%
  \setlength{\abovedisplayshortskip}{2pt plus 2pt minus 1pt}%
  \setlength{\belowdisplayshortskip}{2pt plus 2pt minus 1pt}%
}{}{}
\theoremstyle{plain}
\newtheorem{theorem}{Theorem}[section]
\newtheorem{proposition}[theorem]{Proposition}
\newtheorem{lemma}[theorem]{Lemma}
\newtheorem{corollary}[theorem]{Corollary}
\newtheorem{question}[theorem]{Question}
\theoremstyle{definition}
\newtheorem{definition}[theorem]{Definition}
\newtheorem{observation}[theorem]{Observation}
\newtheorem{example}[theorem]{Example}
\theoremstyle{remark}
\newtheorem{remark}[theorem]{Remark}
\DeclareMathOperator{\maxtrace}{maxtrace}
\DeclareMathOperator{\skel}{Skel}
\newcommand{\erdos}{Erd\H{o}s}
\newcommand{\real}{\mathbb{R}}
\newcommand{\Q}{\mathbb{Q}}
\newcommand{\smolu}{\mathbf{u}}
\newcommand{\smolv}{\mathbf{v}}
\newcommand{\allones}[2]{\mathbbm{1}_{#1\times #2}}
\newcommand*{\tr}[1][]{\operatorname{tr}_{#1}}
\newcommand*{\tp}[1]{#1^{\operatorname{T}}}
\newcommand*{\vect}[1]{\mathbf{#1}}
\DeclarePairedDelimiter{\Frobnorm}{\lVert}{\rVert_{\mathrm F}}
\newcommand*{\FrobIP}[2]{\left\langle#1, #2\right\rangle_{\mathrm F}}
\newcommand{\nn}{\nonumber}
\newcommand{\midmid}{\;\middle|\;}
\newcommand{\setbuilder}[3][]{\ifblank{#1}{\left\{}{#1\{}\,#2\ifblank{#3}{}{\ifblank{#1}{\midmid}{\;#1|\;}} #3\,\ifblank{#1}{\right\}}{#1\}}}
\newcommand{\setbuilderin}[2]{\{\,#1\ifblank{#2}{}{\,|\,} #2\,\}}
\newcommand{\keywords}[1]{%
  \begin{center}
    \textbf{Keywords:} #1
  \end{center}
}
\newcommand{\subjclass}[1]{%
  \begin{center}
    \textbf{2020 Mathematics Subject Classification:} #1
  \end{center}
}
\title{Characterization of Erd\H{o}s matrices by their zero entries}
\author{Priyanka Karmakar, Hariram Krishna, Souvik Pal and G. Krishna Teja}
\begin{document}

\maketitle

\begin{abstract}
    An Erd\H{o}s matrix $E$ is a bistochastic matrix whose sum of squares of entries (Frobenius norm squared) equals its maxtrace (maximum value of the trace of $\sigma E$, as $\sigma$ varies over permutation matrices).
    We characterize all Erd\H{o}s $E$ by the patterns of their zero entries; showing that each such skeleton has at most one $E$.
   We present an algorithm to find all $n\times n$ Erd\H{o}s matrices, which finds them up to $n\leqslant 5$ quickly and also size $n=6$. 
   We further show  some presently known RCDS matrices ($E$ in which the trace of $\sigma E$ remains constant across all the permutations that avoid every zero-entry position in $E$) to be Erd\H{o}s.
\end{abstract}

\subjclass{15A15, 15A45, 15B36, 15B51}
\keywords{Bistochastic matrix, Frobenius
 norm, skeleton, inner and outer (permutations and) traces, RCDS matrix, Erd\H{o}s matrix }

\section{Introduction}\label{intro}

Let $M_n(\mathbb{R})$ denote the set of all $n\times n$ matrices with their entries from real numbers $\mathbb{R}$.
A {\it bistochastic matrix} in $M_n(\mathbb{R})$ has all its entries in the interval $[0, 1]$, with each of its row sums and column sums equal to $1$.
These matrices form the {\it Birkhoff polytype} $\Omega_n$ -- also known as the {\it assignment polytope} -- and can be written as convex combinations of the set of {\it permutation matrices} $P_n$\cite{birkhoff1946tres}.

We recall the {\it Frobenius norm} $\Frobnorm{M}$ of a 
matrix $M$, which is the square root of the sum of squares of all its entries.
Observe $\Frobnorm{M}^2 = \sum_{i,j} M_{i, j}^2 = \tr(\tp{M}M)$; here $\tp{M}$ denotes the transpose of $M$.
The next notion is the trace of $M\in M_n(\mathbb{R})$ taken along a given permutation matrix $\sigma\in P_n$, defined as
\begin{equation}
    \tr[\sigma](M) := \tr(\tp{\sigma}M) = \tr(\tp{M}\sigma).
\end{equation}
This is also called the {\it $\sigma$-th diagonal sum} in $M$ in the literature.
As defined by Kushwaha and Tripathi \cite{kushwaha2025note} the {\it maxtrace} of $M$ is
\begin{equation}
    \maxtrace(M):= \max\limits_{\sigma\in P_n} \tr[\sigma](M).
\end{equation}

An observation of Marcus and Ree \cite{marcus1959diagonals} states that for any $M\in \Omega_n$, 
\[
\maxtrace(M) \geqslant \Frobnorm{M}^2
\]
\erdos{} asked for which bistochastic matrices $E\in \Omega_n$, the equality is attained;  which this paper is devoted to explore.
So, such $E$'s were called {\it \erdos{} matrices} by Tripathi \cite{tripathi2025erdos}; throughout, we reserve the symbol $E$ for them.

The easiest examples of \erdos{} matrices are all the permutation matrices.
One noteworthy property is that transposing $E$ or permuting its rows (or columns) preserves its maxtrace as well as its Frobenius norm and hence its \erdos-ness. 
In this paper, we are interested in the problem of finding all \erdos{} matrices $E$ up to the equivalences
$E\sim LER \sim L\tp{E}R$ for $L, R\in P_n$. 

The only such classes of $2\times 2$ \erdos{} matrices are the identity matrix $I_{2}$ and $J_2 = \frac{1}{2}\mathbbm{1}_{2\times 2}$ (where $\allones{l}{k}$ is an $l\times k$ matrix with all entries $1$).
Bouthat et al. \cite{bouthat2024question} initiated and studied $3\times 3$ \erdos{} matrices, found all of them and showed that they comprise of only 6 classes. 
Recently, Tripathi \cite[Theorems 1.3 \& 1.6]{tripathi2025erdos} established the following:
\begin{itemize}
\item[1)]
There are only finitely many \erdos{} matrices $E$ for each given dimension $n$.
\item[2)] All the entries of such $E$'s are rational. 
 This was proved by expressing the entries of such an $E$ as a solution to linear equations with integer coefficients; see Step 3 in Algorithm-1 in Section \ref{sec-algo}.
\end{itemize}
Then, \cite[Appendix A \& Example 4.3]{kushwaha2025note} characterized all $4\times4$ \erdos{} matrices, and also recorded an ``infinite \erdos{} array''.
One contribution of this paper is, finding all \erdos{} matrices of sizes $5\times 5$ and $6\times 6$ (see \Cref{Remark dim 5} and \Cref{Fig 6x6} in \Cref{Subsection applications}; and Python codes are available at Github repository \cite{github}).

Going back in timeline, the next important (but limited) series of examples are due to \cite[Corollary 2]{marcus1959diagonals}, $J_n = (1/n)\allones{n}{n}$ and notably, these are the only \erdos{} matrices with all entries positive. These also solve Van der Waerden’s permanental conjecture (i.e. they have the minimum permanent among all bistochastic matrices \cite{Erorychev}).
 Also, see \cite{Hwang, BrualdiMinper} for minimizers that solve the refined permanental conjecture within certain faces of $\Omega_n$
 Those minimizers in general are not \erdos{}.
\smallskip\\
We will set up some handy definitions to proceed further.
\begin{definition}\label{Definition Skeleton}
(a) {\bf Skeleton:}  For a matrix $M_{n\times m}$, its skeleton $\skel(M)$ is classically defined to be the following  binary $n\times m$-matrix
    \begin{equation}
        \skel(M)_{i, j} = \begin{cases}
            1 & \text{ if } M_{i,j}\neq 0,\\
            0 & \text{ if }M_{i,j}=0.
        \end{cases}
         \end{equation}
(b) {\bf Skeleton-poset:} We recall that the space of all $n\times m$ skeletons 
can be naturally endowed with a partial order. 
In that, two skeletons $S_1$ and $S_2$ are comparable as $S_1 \leqslant S_2$, iff $S_2 - S_1$ is nonnegative.\smallskip \\
(c)  {\bf Inner permutations:} 
We introduce the following notions.
For a given skeleton $S$, we define the set of inner permutations $P_n(S)$ to be all the permutation matrices whose 1's are also present in $S$:
\begin{equation}\label{Eqn inner permutations}
    P_n(S) := \setbuilder{\sigma \in P_n}{S \geqslant \sigma}.
\end{equation}
We shall call the traces along the inner permutations as {\it inner traces}. We shall call the complementary set of permutations as {\it outer permutations} (and correspondingly {\it outer traces}).
These notions can be extended to any matrix $M$ via $S=\skel(M)$ in the natural way. We shall use $P_n(M)=P_n(\skel(M))$.\end{definition}
\begin{example}\label{Example skeletons} For $M= \Big(\begin{smallmatrix}
    0.4 & \ 0.6 & 0\\
    0.6 & 0 & \ 0.4\\
    0 & \ 0.4 & \ 0.6
\end{smallmatrix}\Big)$, $\skel(M) = \Big(\begin{smallmatrix}
    1 & 1 & 0\\ 1 & 0 & 1\\ 0 & 1 & 1
\end{smallmatrix}\Big)$,
$P_3(M)= \left\{\Big(\begin{smallmatrix}
    0& 1 & 0\\
    1 & 0 & 0\\
    0 & 0& 1
\end{smallmatrix}\Big),\Big(\begin{smallmatrix}
    1& 0 & 0\\
    0 & 0 & 1\\
    0 & 1 & 0
\end{smallmatrix}\Big)\right\}$.
\end{example}
Two matrices have the same skeleton iff their zero positions are the same. 
We shall say that a skeleton has {\it total support} if there exists a bistochastic matrix with that skeleton \cite{Sinkhorn}.
Clearly, not every skeleton has total support; e.g. $(\begin{smallmatrix}
    1 & 1\\ 0 &1
\end{smallmatrix})$.
Recall that, faces of $\Omega_n$ are in bijection with skeletons with total support \cite{brualdi1977-1}. Our study of \erdos{} matrices takes us through such skeletons.
 
From a contemporary study by Brualdi and Dahl \cite{Brualdi2021} (and by our \cref{prop-maxtrace-everywhere}), we can see that the \erdos{} matrices are a special type of {\it RCDS matrices} recalled below.
\begin{definition}[{\cite{Brualdi2021}}]
A square matrix $M$ is called {\it Restricted Common Diagonal Sum} / RCDS matrix if (suggestive of the terminology): the diagonal sums $\tr[\sigma](M)$ remain constant across all $\sigma$ that avoid all 0's in $M$. 

In the terminology of this paper, all inner traces of an RCDS matrix are equal.
And this paper deals with RCDS matrices that are additionally bistochastic.
\end{definition}
As stated in \cite{Brualdi2021}, their inspirations to study RCDS bistochastic matrices are derived from  \cite{Achilles, Bala, Sinkhorndiagonals}.
Those three seminal papers characterized bistochastic matrices $M$'s with all traces (inner and some outer) other than some selected outer traces,  fixed constants.
Notably, Achilles \cite{Achilles} revealed poset structures on those $M$'s with respect to inclusions of outer permutations whose traces we do not fix.
Our main Theorem \ref{theorem-skeleton-uniqueness} completes this program, when those exempted traces in $M$'s are all the outer traces.
Classically, \cite{Achilles, Bala} focused on the properties of such RCDS $M$'s, including their stratification by zero-positions. 
However, the RCDS-ness property of \erdos{} matrices was not explored in the recent works \cite{bouthat2024question,tripathi2025erdos,kushwaha2025note} that recently initiated the study of \erdos{} matrices.

Our analysis in this paper begins by proving this RCDS-ness property of \erdos{} matrices in \cref{prop-maxtrace-everywhere}.
Conversely, any RCDS bistochastic matrix is \erdos{} if and only if the common inner traces are equal to its maxtrace (see \cref{cor-inner-trace-is-frob} and \cref{prop-maxtrace-everywhere}).
We note before passing-on that, not every totally supported skeleton is the skeleton of an RCDS matrix (in particular, of an \erdos{} matrix); see \cite[Example 1.5]{Brualdi2021} which was due to \cite{Achilles}.

Now we quote the following important result on the structure of RCDS bistochastic matrices.
\begin{theorem}[{\cite[Theorem 2.6]{Brualdi2021}}]\label{Theorem Brualdi-Dahl}
Given any RCDS bistochastic matrix $E$, we can express it as the following {\it Hadamard product} using its skeleton:
\begin{equation} \label{uv expression}
    E= \big[({\bf u}_i+{\bf v}_j)\times \skel(E)_{i,j}\big]_{1\leqslant i,j\leqslant n}
\end{equation}
for some real vectors  $\vect{u},\vect{v}$. 
These vectors $\vect{u} , \vect{v}$ are unique up to adding any fixed scalar $k_t$ to $I_t$-components in $\vect{u}$ and subtracting the same $k_t$ from $J_t$-components in $\vect{v}$, where $I_t$ and $J_t$ are set of indices such that $E_{I_t\times J_t}$ forms an indecomposable block of $E$, for each $t$.
\end{theorem}
We recall \cite{Brualdi2021} computed $\vect{u}, \vect{v}$ for each RCDS $E$ above, by solving 
\begin{equation}\label{Brualdi-Dahl system}
\begin{bmatrix}
  D_R  & \skel(E) \\
  \tp{\skel(E)} & D_C
\end{bmatrix} \left(\begin{matrix}
    \vect{u}\\ \vect{v}
\end{matrix}\right) = \allones{2n}{1},
\end{equation}
where $D_R$ is the diagonal matrix whose diagonal-entries are the row sums in $E$ and $D_C$ is the diagonal matrix of the column sums in~$E$.
This method indeed leads us to extend the rational-entries result on \erdos{} matrices by Tripathi: 
\begin{lemma}\label{Lemma rationality for RCDS}
    The entries of every RCDS bistochastic matrix are rational.
    In fact, there exists a choice of $\vect{u}$ and $\vect{v}$ which are in $\mathbb{Q}^n$ when expressed as in \eqref{uv expression}.
\end{lemma}
This lemma is proved in Subsection~\ref{mainproof}. 
Now we turn our focus onto \erdos{}-ness property, beginning with a criterion for determining it:
\begin{lemma} \label{rcds to erdos}
    Let $E= \big[(\smolu_i+\smolv_j)\times \skel(E)_{i,j}\big]_{1\leqslant i,j\leqslant n}$ be an RCDS bistochastic matrix. Then $E$ is \erdos{} if and only if for all outer permutations $\sigma\in P_n\setminus P_n(E)$, the (outer) sums
    \begin{equation}
        \sum\limits_{\substack{1\leqslant i, j \leqslant n\\ \sigma_{i, j}=1\  \&\  E_{i, j}=0}}(\smolu_i + \smolv_{j}) \geqslant 0.
    \end{equation}
\end{lemma}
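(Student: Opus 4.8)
The plan is to translate the Erdős condition $\maxtrace(E)=\Frobnorm{E}^2$ into a statement comparing an arbitrary outer trace against the common inner trace, and then expand both sides using the $(u_i+v_j)$ expression \eqref{uv expression}. First I would fix notation: let $t$ denote the common value of all inner traces $\tr[\sigma](E)$ for $\sigma\in P_n(E)$, which exists since $E$ is RCDS. Since the permutation matrices realizing the maxtrace include all inner permutations, and since by the Marcus–Ree inequality together with \Cref{prop-maxtrace-everywhere} we know $\maxtrace(E)\geqslant\Frobnorm{E}^2$ with the inner traces all equal to some fixed quantity, the first step is to check that in fact $\Frobnorm{E}^2=t$ whenever $E$ is RCDS bistochastic. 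This should follow by writing $\Frobnorm{E}^2=\sum_{i,j}E_{i,j}^2=\sum_{i,j}E_{i,j}(u_i+v_j)\skel(E)_{i,j}=\sum_{i,j}E_{i,j}(u_i+v_j)$ and then using the row- and column-sum conditions $\sum_j E_{i,j}=1$, $\sum_i E_{i,j}=1$ to get $\sum_{i,j}E_{i,j}(u_i+v_j)=\sum_i u_i+\sum_j v_j$; separately, for any inner permutation $\sigma$, $\tr[\sigma](E)=\sum_i E_{i,\sigma(i)}=\sum_i(u_i+v_{\sigma(i)})=\sum_i u_i+\sum_j v_j$ as well, since $\sigma$ is a bijection. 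Hence $\Frobnorm{E}^2=t=\sum_i u_i+\sum_j v_j$, and $E$ is Erdős exactly when no outer permutation beats this value, i.e. $\tr[\sigma](E)\leqslant t$ for every $\sigma\in P_n\setminus P_n(E)$.

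Next I would expand an outer trace. For $\sigma\in P_n\setminus P_n(E)$, split the index set $\{1,\dots,n\}$ according to whether the entry $E_{i,\sigma(i)}$ is zero or not:
\[
\tr[\sigma](E)=\sum_{i:\,E_{i,\sigma(i)}\neq 0}E_{i,\sigma(i)}+\sum_{i:\,E_{i,\sigma(i)}=0}0
=\sum_{i:\,E_{i,\sigma(i)}\neq 0}(u_i+v_{\sigma(i)}).
\]
On the other hand, $t=\sum_{i=1}^n(u_i+v_{\sigma(i)})$ again because $\sigma$ permutes the columns. Subtracting, the difference is exactly the sum over the ``bad'' indices where $\sigma$ passes through a zero of $E$:
\[
t-\tr[\sigma](E)=\sum_{i:\,E_{i,\sigma(i)}=0}(u_i+v_{\sigma(i)})
=\sum_{\substack{1\leqslant i,j\leqslant n\\ \sigma_{i,j}=1\ \&\ E_{i,\sigma(i)}=0}}(u_i+v_j).
\]
Therefore $\tr[\sigma](E)\leqslant t$ if and only if this sum is $\geqslant 0$, which is precisely the stated criterion; ranging over all outer $\sigma$ gives the equivalence with Erdős-ness.

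The only genuinely delicate point is the very first step — confirming that for an RCDS bistochastic matrix the common inner trace equals $\Frobnorm{E}^2$ and that the maxtrace is attained on $P_n(E)$ whenever $E$ is Erdős, rather than only somewhere in $P_n$. The identity $\Frobnorm{E}^2=t$ falls out of the Hadamard-product form as shown above, so the substantive content is that this common value is not merely a lower bound for the maxtrace but the thing the Erdős condition pins down; here I would lean on \Cref{prop-maxtrace-everywhere} and the Marcus–Ree observation that $\maxtrace(E)\geqslant\Frobnorm{E}^2$ always, so that equality is equivalent to no outer trace exceeding $\Frobnorm{E}^2=t$. Everything after that is the bookkeeping of the two telescoping sums above, which is routine. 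I would also remark that the argument never uses positivity of the $u_i+v_j$ on the support beyond what is already encoded in $E$ being a genuine (nonnegative) bistochastic matrix, so the lemma is really a statement about the combinatorics of which diagonals avoid the zero pattern.
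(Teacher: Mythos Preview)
Your argument is correct and follows essentially the same route as the paper's: compute that the common inner trace equals $\sum_i u_i+\sum_j v_j=\Frobnorm{E}^2$, then for any outer $\tau$ write $\tr[\tau](E)$ as that full sum minus the contribution from positions where $E_{i,\tau(i)}=0$, so the Erd\"os condition reduces exactly to nonnegativity of the stated sum. The only difference is that you prove $\Frobnorm{E}^2=t$ inline via the Hadamard form and row/column sums, whereas the paper records this fact separately (just after \Cref{prop-maxtrace-everywhere}) and simply invokes it.
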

Hence, $\min(\vect{u}) + \min(\vect{v}) \geqslant 0$ is a sufficient condition for $E$ to be \erdos{}.
This lemma is proved in Subsection~\ref{mainproof}.
Our first result generates more families of \erdos{} matrices following \cite{Brualdi2021}.
 Brualdi and Dahl show RCDS-property of the following three sets of matrices, notably in arbitrary dimensions $n$.
\begin{enumerate}
\item \hypertarget{zigzag}{} For every positive integer triad $0<s<r<n$,
\[
X^{(r,s,n)}:= \left(\begin{array}{@{}c|c@{}}
    \frac{1}{r} \allones{r}{s} & \frac{r-s}{r(n-s)}\allones{r}{(n-s)}  \\ \hline
  {\bf 0}_{(n-r)\times s}   & \frac{1}{n-s}\allones{(n-r)}{(n-s)}
\end{array}\right)_{n\times n}.
\]
\item {\it Zig-zag} RCDS patterns: Fix two positive integer tuples $\vect{r}=(r_1,\ldots, r_k)$ and $\vect{s}=(s_1,\ldots, s_{k+1})$,
satisfying
the ``dominance ordering'' and interlacing conditions: 
\begin{eqnarray*}
    \sum\limits_{i=1}^t s_i < \sum\limits_{i=1}^t r_i < \sum\limits_{i=1}^{t+1}s_i \ \forall \ 1\leqslant t < k &\quad \text{ and} &\quad\sum_{i=1}^{k}s_i\leqslant\sum_{i=1}^{k}r_i = \sum_{i=1}^{k+1}s_i = n. 
    \end{eqnarray*}
  
\begin{equation} \label{brualdi family 3}
  \text{Consider} \ X^{(\vect{r}, \vect{s})}:=\left(\begin{array}{@{}c|c@{}|c@{}|c@{}|@{}c}
    \alpha_{1, 1} \allones{r_1}{s_1} & \alpha_{1, 2} \allones{r_1}{s_2} & 0 &  & 0 \\ \hline
    0 & \alpha_{2, 2} \allones{r_2}{s_2} & \alpha_{2, 3} \allones{r_2}{s_3} &  & 0\\ \hline
    & & \ddots &  \ddots &   \\ \hline
    0 & 0 & 0 & \alpha_{k, k} \allones{r_k}{s_k} & \  \alpha_{k, k+1} \allones{r_{k}}{s_{k+1}} 
\end{array}\right)_{n\times n}.
\end{equation}
Here $\alpha_{i, j}>0$ (see \cref{Remark Brualdi's row size mismatch})
We shall allow $s_{k+1}=0$, i.e. $k+1$-th column block to be void. This family includes the family in point 1.

\item Fix $p\in \mathbb{N}$, $n=2p$, and any $4$-tuple $\vect{\alpha}=(\alpha_1,\ldots, \alpha_4)\in \{1,\ldots, p\}^4$ satisfying $\alpha_1+\alpha_4=\alpha_2+\alpha_3$.
Next, for each $1\leqslant i\leqslant 4$, we consider a $p\times p$-skeleton $A_i$ with each row and column of $A_i$ having exactly $\alpha_i$-many 1's.
We define $X^{\vect{\alpha}}:= \frac{1}{\alpha_1\alpha_4 + \alpha_2\alpha_3}\left(\begin{array}{@{}c|c@{}}
    \alpha_4A_1 & \alpha_3A_2\\ \hline
  \alpha_2A_3& \alpha_1A_4 
\end{array}\right)_{n\times n}$.

\end{enumerate}
\begin{theorem}\label{Theorem Erdos-ness of 3 families of RCDS}
    All RCDS matrices in families 1 ($X^{(r,s,n)}$) and 3 ($X^{\vect{\alpha}}$) above are \erdos{}. Additionally, all the matrices in family 2 ($X^{(\vect{r}, \vect{s})}$) with $k\leqslant 2$ are also \erdos{}. There exist RCDS matrices $X^{(\vect{r}, \vect{s})}$ that are not \erdos{} when $k\geqslant 3$.
\end{theorem}
This theorem is proved in \Cref{Section proof for RCDS patterns}.
See \cref{Remark Brualdi's row size mismatch} on the construction of $X^{(\vect{r}, \vect{s})}$'s from \cite{Brualdi2021}. 
See \Cref{type 3 counter} for a counterexample of a non-\erdos{} matrix of the type $X^{(\vect{r}, \vect{s})}$ for $k = 3$.

To head to our second result, we resume our discussion on \cite[Algorithm 1]{tripathi2025erdos} which produces a possibly \erdos{} matrix $E$ for each linearly independent subset of permutations in $P_n$. Indeed, the output matrices of that algorithm also include all RCDS matrices.
Our paper improves upon this by mapping the classically well-studied binary matrices with total support onto a set which includes all \erdos{} matrices. 
Indeed, the count of totally supported skeletons is smaller than that of linearly independent subsets (on which Tripathi's study is based) of $P_n$, see \cref{Remark comparing Algorithms}. 
In our next result \cref{theorem-skeleton-uniqueness}b, we show that there is at most one \erdos{} matrix with a given skeleton.
Equivalently, there is at most one \erdos{} matrix in each {\it proper face} of the polytope $\Omega_n$.
Thus, one might be able to study and characterize \erdos{} matrices more closely with the abundance of literature (\cite{Sinkhorn} and \cite{brualdi1977-1, brualdi1977-2, brualdi1977-3, brualdi1976-4} among several others) available on skeletons/binary/$\{0,1\}$-matrices and faces of $\Omega_n$.

Now we are ready to state our main result which will be proved in \cref{mainproof}.
Notably, this strengthens \cite[Theorem 2]{Achilles} by showing the strict inequality in statement a).
\begin{theorem} \label{theorem-skeleton-uniqueness}
  Fix two \erdos{} matrices $E_1, E_2$.\begin{enumerate} [label=\alph*)]
        \item If $\skel(E_1) < \skel(E_2)$, then $\maxtrace(E_1)>\maxtrace(E_2)$.
        \item If $\skel(E_1) = \skel(E_2)$, then $E_1=E_2$.
    \end{enumerate}
\end{theorem}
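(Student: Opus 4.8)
The plan is to reduce both parts to one clean identity relating Erd\"os matrices whose skeletons are comparable. Concretely, I claim that if $E_1,E_2$ are Erd\"os matrices with $\skel(E_1)\leqslant\skel(E_2)$, then
\[
  \Frobnorm{E_1-E_2}^2 \;=\; \maxtrace(E_1)-\maxtrace(E_2).
\]
To prove this I would expand $\Frobnorm{E_1-E_2}^2=\Frobnorm{E_1}^2-2\FrobIP{E_1}{E_2}+\Frobnorm{E_2}^2$ and use the defining Erd\"os equalities $\Frobnorm{E_i}^2=\maxtrace(E_i)$, so that the entire content collapses onto the cross term $\FrobIP{E_1}{E_2}=\maxtrace(E_2)$. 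For that, take a Birkhoff decomposition $E_1=\sum_k c_k P_k$ with $c_k>0$ and $P_k\in P_n$; positivity of the $c_k$ forces $(P_k)_{i,j}=0$ at every zero of $E_1$, so $\skel(P_k)\leqslant\skel(E_1)\leqslant\skel(E_2)$, i.e. each $P_k$ is an \emph{inner} permutation of $E_2$. Hence $\FrobIP{E_1}{E_2}=\tr(\tp{E_1}E_2)=\sum_k c_k\,\tr[P_k](E_2)$, and since $E_2$ is Erd\"os -- hence RCDS with every inner trace equal to $\maxtrace(E_2)$ (\Cref{prop-maxtrace-everywhere} and the surrounding discussion; equivalently, once RCDS is known, apply Birkhoff to $\Frobnorm{E_2}^2$) -- every summand equals $\maxtrace(E_2)$, and $\sum_k c_k=1$ gives $\FrobIP{E_1}{E_2}=\maxtrace(E_2)$.

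Granting this identity, part (a) is immediate: from $\skel(E_1)<\skel(E_2)$ we have $\skel(E_1)\leqslant\skel(E_2)$, so $\maxtrace(E_1)-\maxtrace(E_2)=\Frobnorm{E_1-E_2}^2\geqslant 0$; were this $0$ we would get $E_1=E_2$ and hence $\skel(E_1)=\skel(E_2)$, contradicting $\skel(E_1)<\skel(E_2)$, so the inequality is strict. For part (b), $\skel(E_1)=\skel(E_2)$ lets me run the cross-term computation \emph{both ways}: from $\skel(E_1)\leqslant\skel(E_2)$ I get $\FrobIP{E_1}{E_2}=\maxtrace(E_2)$, and from $\skel(E_2)\leqslant\skel(E_1)$ I get $\FrobIP{E_2}{E_1}=\maxtrace(E_1)$; symmetry of the Frobenius inner product then forces $\maxtrace(E_1)=\maxtrace(E_2)$, whence the identity yields $\Frobnorm{E_1-E_2}^2=0$, i.e. $E_1=E_2$.

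The step I expect to carry all the weight is the evaluation $\FrobIP{E_1}{E_2}=\maxtrace(E_2)$, which packages the two genuinely nontrivial inputs: that a Birkhoff decomposition of $E_1$ is supported inside $\skel(E_1)$ (elementary, but this is exactly what makes the comparability hypothesis bite), and that an Erd\"os matrix has \emph{all} its inner traces equal to its maxtrace -- the RCDS property of Erd\"os matrices, i.e. \Cref{prop-maxtrace-everywhere} together with the Marcus--Ree equality analysis. Once those are in hand the remainder is a one-line inner-product manipulation, so I anticipate no further obstacle; in particular, the ``at most one Erd\"os matrix per skeleton'' statement (b) falls out with essentially no extra work beyond (a).
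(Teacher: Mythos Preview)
Your proof is correct, and the central computation --- that $\FrobIP{E_1}{E_2}=\maxtrace(E_2)$ whenever $\skel(E_1)\leqslant\skel(E_2)$, via a Birkhoff decomposition of $E_1$ supported on $P_n(E_1)\subseteq P_n(E_2)$ together with \Cref{prop-maxtrace-everywhere} --- is exactly the same as the one the paper carries out (hidden in its chain of equalities $\sum c_id_j\FrobIP{\sigma_i}{\sigma_j}=\sum d_id_j\FrobIP{\sigma_i}{\sigma_j}$). Where you diverge is in how you extract the conclusions. The paper, after obtaining $\maxtrace(E_1)\geqslant\maxtrace(E_2)$, argues strictness in (a) and uniqueness in (b) by passing to a basis of permutations, forming the Gram matrix $G=[\FrobIP{\sigma_i}{\sigma_j}]$, and invoking its nonsingularity to force the coefficient vectors to coincide. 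You instead package the cross-term computation into the single identity $\Frobnorm{E_1-E_2}^2=\maxtrace(E_1)-\maxtrace(E_2)$, which makes both strictness and uniqueness immediate: equality forces $E_1=E_2$ (hence equal skeletons), and in (b) symmetry of the inner product gives $\maxtrace(E_1)=\maxtrace(E_2)$ directly. Your route is shorter and avoids the Gram-matrix machinery entirely; the paper's route, on the other hand, is closer in spirit to the algorithmic viewpoint of \Cref{sec-algo}, where the Gram system $G\mathbf{x}=\text{const}\cdot\allones{l}{1}$ reappears as the way Erd\"os matrices are actually computed.
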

\begin{remark} \label{remark-count}
    The number of \erdos{} matrices of size $n\times n$ is thus bounded by $2^{n^2}$.
\end{remark}
We prescribe an algorithm to find all such matrices in \cref{sec-algo} utilizing these insights.
We found all $6\times 6$ \erdos{} matrices using this algorithm. Given that there can be up to $2(n!)^2$ elements in an equivalence class and the upper bound from \cref{remark-count}, a reasonable estimate for the count of equivalence classes of \erdos{} matrices is ${2^{n^2}} / {2(n!)^2}$. This visibly follows from \cref{tab-count} given below.
\begin{table}[h]
    \centering
    \begin{tabular}{|c|r|r|r|r|}
        \hline
        $n$ & \makecell{A: Count of $n\times n$\\ \erdos{} classes}  & \makecell{B: Binary matrices\\ with total support} & A/B
        & A/$\frac{2^{n^2}}{2(n!)^2}$
        \\ \hline
        1 & 1 & 1 & 1 & 1\\
        2 & 2 & 2 & 1 & 1\\
        3 & 6 & 6 & 1 & 0.84\\
        4 & 32 & 33 & 0.97 & 0.56\\
        5 & 469 & 534 & 0.89 & 0.40\\
        6 & 23851 & 32174 & 0.74 & 0.36\\ \hline
    \end{tabular}
    \caption{Count of $n\times n$ \erdos{} matrices up to transposition and permutations, compared to total number of binary matrices with total support up to the same equivalences.}
    \label{tab-count}
\end{table}

The sequence of the number of $n\times n$ binary matrices with total support is available in \cite{oeisA326342} .
More relevant here is \cite{oeisA326343} which lists the number of equivalence classes of such matrices up to the permutation of rows and columns. 
This refinement gives us an upper bound on the number of equivalence classes of \erdos{} matrices.
Our work can currently extend those integer sequences, as well as the \erdos{} matrix count sequence \cite{oeisA381896} by the terms 791 and 46185 for $n=5, 6$.

In Subsection \ref{Subsection erdos denominators}, we discuss the matrices with most number of unique nonzero entries and those with the largest denominator when expressed in their simplest terms.

\section{Uniqueness of \erdos{} matrices by their zeros}\label{Section 2}
We recall the Birkhoff-von Neuman Theorem here-
\begin{theorem}[\cite{birkhoff1946tres}]\label{Theorem by Birkoff}
Given any $M\in \Omega_n$ the space of $n\times n$ bistochastic matrices, we can express it as a convex sum of the permutation matrices.
\begin{equation} \label{birkhoff--convex-sum}
    M = \sum_{\sigma_i\in P_n}{a_i\sigma_i};\quad \quad
    \sum a_i=1, \quad \quad
    0\leqslant a_i \leqslant 1.
\end{equation}
\end{theorem}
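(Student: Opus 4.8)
The plan is to prove the statement by induction on the number of positive entries of $M$, with the crucial input being an existence result for inner permutations that follows from Hall's marriage theorem. The key observation is that for a bistochastic $M\in\Omega_n$ the set of inner permutations $P_n(M)$ is always nonempty: viewing the $1$'s of $\skel(M)$ as the edges of a bipartite graph between rows and columns, an element of $P_n(M)$ is precisely a perfect matching. To verify Hall's condition I would fix an arbitrary set $S$ of rows; the entries of $M$ in these rows carry total mass $|S|$ (each row sums to $1$), and all of this mass sits in the set $N(S)$ of columns meeting $S$ in a positive entry. Since each column sums to $1$, the columns in $N(S)$ can absorb total mass at most $|N(S)|$, forcing $|N(S)|\geqslant|S|$. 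Hall's theorem then yields a perfect matching, i.e.\ a $\sigma\in P_n(M)$.

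For the base case, suppose $M$ has exactly $n$ positive entries. Since $P_n(M)\neq\emptyset$, some permutation $\sigma$ has all $n$ of its supporting positions positive in $M$; these positions must then exhaust all positive entries of $M$, giving exactly one in each row and column, and the row-sum constraint forces each of them to equal $1$. Hence $M=\sigma$ is itself a permutation matrix, trivially a convex combination of one term.

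For the inductive step, assume $M$ has more than $n$ positive entries and pick $\sigma\in P_n(M)$. Set $a:=\min_i M_{i,\sigma(i)}$, which is positive, and consider $M':=M-a\sigma$. Then $M'$ has nonnegative entries with every row and column sum equal to $1-a$, and by the choice of $a$ at least one formerly positive entry of $M$ has been annihilated, so $M'$ has strictly fewer positive entries than $M$. If $a=1$ then $M=\sigma$ and we are done; otherwise $M'/(1-a)\in\Omega_n$, and the inductive hypothesis expresses it as $\sum_k b_k\tau_k$ with $b_k\geqslant 0$, $\sum_k b_k=1$, and each $\tau_k\in P_n$. Substituting back gives $M=a\sigma+(1-a)\sum_k b_k\tau_k$, which is a convex combination of permutation matrices since $a+(1-a)\sum_k b_k=1$ and all coefficients are nonnegative.

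The main obstacle is the existence step, namely guaranteeing $P_n(M)\neq\emptyset$; once a supporting permutation is in hand the remainder is bookkeeping. The delicate points are extracting Hall's condition cleanly from the doubly stochastic constraints via the mass-counting argument above, and ensuring the induction genuinely decreases the number of positive entries, which is secured by taking $a$ to be the minimum supporting value so that some entry is forced to zero. Equivalently, one could argue that $\Omega_n$ is a compact convex polytope whose vertices are exactly the permutation matrices, but the constructive matching argument is more transparent and dovetails naturally with the inner-permutation framework used throughout this paper.
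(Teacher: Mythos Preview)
Your argument is correct and is essentially the classical constructive proof (Birkhoff's algorithm) of the Birkhoff--von Neumann theorem: Hall's marriage condition is verified by the mass-counting argument, yielding an inner permutation, and then induction on the number of positive entries peels off one permutation at a time. The paper itself does not prove this theorem; it is stated as a recalled classical result with a citation to \cite{birkhoff1946tres}, so there is no in-paper proof to compare against. Your proof is in fact the standard one, and it meshes well with the paper's later use of the inner-permutation set $P_n(M)$ and with the remark following Corollary~\ref{cor-birkoff} that Birkhoff's algorithm produces a convex combination supported on $P_n(M)$.
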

 We wish to recast this result in the framework of skeletons. This will be more useful for the methods of this paper.
Let us denote the vector subspace in $M_n(\real)$ spanned by all the matrices of $P_n(M)$ by $\real P_n(M)$; it might suffice to work just over the field of rationals $\Q$.
\begin{observation}
 $M\in \real P_n(M)$ by \cref{Theorem by Birkoff}.
\end{observation}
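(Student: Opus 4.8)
The plan is to read the Birkhoff--von Neumann decomposition of \Cref{Theorem by Birkoff} through the lens of nonnegativity. Write $M=\sum_i a_i\sigma_i$ with $\sigma_i\in P_n$, $a_i\geqslant 0$, and $\sum_i a_i=1$, and discard every term with $a_i=0$, so that we may assume $a_i>0$ for all $i$ appearing in the sum. The claim will follow once we show that each such $\sigma_i$ actually lies in $P_n(M)$, i.e.\ is supported inside $\skel(M)$.

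To see this, fix an index $i$ and a position $(p,q)$ with $(\sigma_i)_{p,q}=1$. Since permutation matrices have nonnegative entries and the $a_j$ are nonnegative, $M_{p,q}=\sum_j a_j(\sigma_j)_{p,q}\geqslant a_i(\sigma_i)_{p,q}=a_i>0$, so $M_{p,q}\neq 0$ and hence $\skel(M)_{p,q}=1$. As $(p,q)$ ranged over all $1$-positions of $\sigma_i$, this gives $\sigma_i\leqslant\skel(M)$, that is, $\sigma_i\in P_n(M)$ by \eqref{Eqn inner permutations}. Consequently $M=\sum_i a_i\sigma_i$ is a linear combination of matrices of $P_n(M)$, so $M\in\real P_n(M)$. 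If $M$ has rational entries one can start instead from a rational Birkhoff decomposition, and the identical argument shows $M$ lies in the $\Q$-span of $P_n(M)$, which justifies the remark that working over $\Q$ suffices.

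There is no real obstacle here: the statement is essentially immediate from Birkhoff--von Neumann. The only points requiring a moment of care are the bookkeeping step of deleting the zero-coefficient terms before arguing, and the observation that a convex combination is in particular a linear combination, so that we genuinely land in the span $\real P_n(M)$ (and not merely in its convex hull).
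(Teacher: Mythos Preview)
Your argument is correct and is precisely the reasoning the paper leaves implicit: the Observation is stated without proof, simply citing \Cref{Theorem by Birkoff}, and your nonnegativity argument (each $\sigma_i$ with $a_i>0$ is supported in $\skel(M)$) is the standard way to unpack that citation. The paper's only additional remark, appearing after \Cref{cor-birkoff}, is that Birkhoff's \emph{algorithm} already produces permutations in $P_n(M)$; your route instead works directly from the \emph{statement} of the theorem, which is arguably cleaner since it avoids appealing to the constructive proof.
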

\begin{corollary} \label{cor-birkoff}
    Given $M\in \Omega_n$ and $\skel(M) = S$, there exist $a_i$ and $\sigma_i$ such that
    \begin{equation} \label{birkhoff--skeleton-convex-sum}
    M = \sum_{\sigma_i\in P_n(M)}{a_i\sigma_i};\quad \quad
    \sum a_i=1, \quad \quad
    0\leqslant a_i \leqslant 1.
\end{equation}
\end{corollary}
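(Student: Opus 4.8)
The plan is to bootstrap off the ordinary Birkhoff--von Neumann decomposition from \Cref{Theorem by Birkoff} and show that, after a harmless cleanup, every permutation appearing in it is automatically an inner permutation of $M$. So first I would invoke \eqref{birkhoff--convex-sum} to write $M = \sum_i a_i\sigma_i$ with $\sum_i a_i = 1$ and $a_i\in[0,1]$, and then delete all the terms with $a_i = 0$; this leaves a shorter sum in which every coefficient is strictly positive, while the constraints $\sum a_i = 1$ and $0\leqslant a_i\leqslant 1$ still hold.

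Next, for a surviving index $i$ I would look at any position $(k,\ell)$ with $(\sigma_i)_{k,\ell} = 1$. Since each $\sigma_j$ is a $0$/$1$ matrix and each $a_j\geqslant 0$, dropping the remaining summands gives $M_{k,\ell} = \sum_j a_j(\sigma_j)_{k,\ell} \geqslant a_i(\sigma_i)_{k,\ell} = a_i > 0$, so $\skel(M)_{k,\ell} = 1$. As $(k,\ell)$ ranges over the $1$-positions of $\sigma_i$, this says precisely that $\sigma_i\leqslant\skel(M)$ in the skeleton-poset, i.e.\ $\sigma_i\in P_n(\skel(M)) = P_n(M)$ by \eqref{Eqn inner permutations}. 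Hence the pruned sum is already of the form \eqref{birkhoff--skeleton-convex-sum}, which is what we want.

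There is really no serious obstacle here: the whole content is the entrywise domination $M \geqslant a_i\sigma_i$, and the only points that need a word of care are the initial reduction to strictly positive weights (so that $M_{k,\ell}\geqslant a_i$ forces $M_{k,\ell}\neq 0$ rather than merely $\geqslant 0$) and the observation that the convexity constraints are unaffected by discarding zero-weight terms. Alternatively one could deduce the statement from the Observation that $M\in\real P_n(M)$ together with Carathéodory's theorem, but the direct argument above is shorter and keeps the explicit convex-combination form.
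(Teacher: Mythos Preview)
Your argument is correct. The paper does not spell out a proof of this corollary; it simply remarks that Birkhoff's \emph{algorithm} already produces permutations in $P_n(M)$, since at each step one extracts a permutation supported on the current nonzero entries and subtracts a positive multiple of it. Your route is slightly different: you start from \emph{any} Birkhoff decomposition (not necessarily the algorithmic one), prune the zero-weight terms, and use the entrywise inequality $M_{k,\ell}\geqslant a_i(\sigma_i)_{k,\ell}$ to force $\sigma_i\leqslant\skel(M)$. This buys you independence from the details of the algorithm---you only need the existence statement \eqref{birkhoff--convex-sum}---at the cost of one extra line. Both approaches are equally elementary, and the paper would have no objection to yours.
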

The {\it Birkoff's algorithm} expresses $M\in \Omega_n$ as a convex sum of linearly independent permutation matrices in $P_n(M)$;
so $\sigma_i$ in \eqref{birkhoff--convex-sum} or \eqref{birkhoff--skeleton-convex-sum} with nonzero coefficients $a_i$ can be assumed to be linearly independent.
\begin{lemma} \label{lemma-inner-always}
    Given any $M\in\Omega_n$, and any permutation $\sigma_0\in P_n(M)$, there exists a collection $\{\sigma_1, \ldots \sigma_l\}\subseteq P_n(M)\setminus\{\sigma_0\}$ and $a_0, \ldots, a_{l}\in [0,1]$, such that $a_0 > 0$ and $M = \sum_{0\leqslant i\leqslant l}a_i\sigma_i$.
\end{lemma}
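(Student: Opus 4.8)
The idea is to peel off a small multiple of the prescribed inner permutation $\sigma_0$ and renormalise, then apply the skeleton-form of Birkhoff's theorem (\Cref{cor-birkoff}) to what remains. Since $\sigma_0 \in P_n(M)$, i.e. $\sigma_0 \leqslant \skel(M)$, every entry $M_{i,\sigma_0(i)}$ lying under a $1$ of $\sigma_0$ is strictly positive. Set $\epsilon := \tfrac12 \min_i M_{i,\sigma_0(i)} > 0$; then $M - \epsilon\sigma_0$ has all entries $\geqslant 0$ (indeed the entries under $\sigma_0$ stay positive), and all of its row and column sums equal $1-\epsilon$. Hence $M' := \tfrac{1}{1-\epsilon}(M - \epsilon\sigma_0)$ is bistochastic. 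Moreover subtracting $\epsilon\sigma_0$ only shrinks entries, so $\skel(M') \leqslant \skel(M)$ and therefore $P_n(M') \subseteq P_n(M)$; in particular every inner permutation of $M'$ is an inner permutation of $M$. (If $n=1$ then $M=\sigma_0$ and the claim is immediate, so we may assume $\epsilon\in(0,1)$.)

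Next I would apply \Cref{cor-birkoff} to $M'$ to write $M' = \sum_{\tau_j\in P_n(M')} b_j \tau_j$ with $b_j \geqslant 0$ and $\sum_j b_j = 1$. Substituting back,
\begin{equation*}
    M = \epsilon\sigma_0 + (1-\epsilon)M' = \epsilon\sigma_0 + \sum_j (1-\epsilon)b_j\,\tau_j,
\end{equation*}
a convex combination of permutation matrices, all of which lie in $P_n(M') \cup \{\sigma_0\} \subseteq P_n(M)$. Collecting terms: if some $\tau_{j_0}$ equals $\sigma_0$, merge its coefficient into the $\sigma_0$-term, and let $\sigma_1,\ldots,\sigma_l$ enumerate the remaining $\tau_j \neq \sigma_0$, with coefficients $a_i := (1-\epsilon)b_{j_i}$ and $a_0 :=$ the total coefficient of $\sigma_0$ (which is $\geqslant \epsilon > 0$). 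Then $a_0 > 0$, each $a_i \in [0,1]$, $\sum_{0\leqslant i\leqslant l} a_i = \epsilon + (1-\epsilon)\sum_j b_j = 1$, and $M = \sum_{0\leqslant i\leqslant l} a_i\sigma_i$ with $\{\sigma_1,\ldots,\sigma_l\}\subseteq P_n(M)\setminus\{\sigma_0\}$, as required.

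\textbf{Main obstacle.} There is essentially no deep obstacle here; the only point requiring care is the very first one — ensuring $M - \epsilon\sigma_0$ stays entrywise nonnegative — which is exactly where the hypothesis $\sigma_0 \leqslant \skel(M)$ is used, together with the elementary observation that shrinking entries cannot enlarge the skeleton, so that $P_n(M') \subseteq P_n(M)$ and the decomposition produced by \Cref{cor-birkoff} for $M'$ is automatically a decomposition of $M$ into inner permutations. If one additionally wanted the $\sigma_i$ to be linearly independent, one would run Birkhoff's algorithm on $M'$ as noted after \Cref{cor-birkoff}, but linear independence is not needed for this lemma.
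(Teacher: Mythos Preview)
Your proof is correct and follows essentially the same approach as the paper: peel off a small positive multiple of $\sigma_0$, renormalise the remainder to a bistochastic matrix, and apply \Cref{cor-birkoff} to it. The only cosmetic difference is that the paper parametrises via $G:=(1+\epsilon)M-\epsilon\sigma_0$ and writes $M=\tfrac{1}{1+\epsilon}(\epsilon\sigma_0+G)$, whereas you subtract first and rescale by $1/(1-\epsilon)$; your version is in fact slightly more explicit about why the resulting permutations lie in $P_n(M)$.
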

In other words, a bistochastic matrix always has a convex-sum expansion that involves any permutation of our choice contained within its skeleton.
\begin{proof}
    We will first create a matrix $G:= (1+\epsilon)M - \epsilon\sigma_0$ with a small value of $\epsilon>0$. Since $M$ is nonzero at every position where $\sigma_0$ is nonzero we can choose an $\epsilon$ such that $G$ is a bistochastic matrix.
    In particular, any value $0 < \epsilon \leqslant \min_{j}{M_{j,\sigma_0(j)}}$ will do.
    Now by \cref{cor-birkoff}, $G$ can be expressed as\medskip\\
    \hspace*{3cm}    $G = \sum\limits_{\sigma_i\in P_n(M)}{a_i\sigma_i}; \quad\quad
        M = \frac{1}{1+\epsilon}
            \Big(
                \epsilon \sigma_0 + \sum\limits_{\sigma_i\in P_n(M)}{a_i\sigma_i}
            \Big)$.
\end{proof}
\begin{corollary}
More generally, every $M\in \Omega_n$ admits a convex-sum expression over all the permutations in $P_n(M)$ appearing simultaneously.
    To see this, iteratively apply \cref{lemma-inner-always} to every $\sigma\in P_n(E)$ and take the average over all the expressions.
\end{corollary}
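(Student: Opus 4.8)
The plan is to combine the single-permutation expansions furnished by \Cref{lemma-inner-always}, one for each inner permutation, and then average them. First I would fix an enumeration $P_n(M)=\{\tau_1,\dots,\tau_N\}$; this set is finite and nonempty, the latter because \Cref{cor-birkoff} already exhibits $M$ as a convex combination of members of $P_n(M)$. For each index $k$, applying \Cref{lemma-inner-always} with $\sigma_0=\tau_k$ yields a convex-sum expression $M=\sum_{\sigma\in P_n(M)} b_{k,\sigma}\,\sigma$, where I pad with a zero coefficient any permutation in $P_n(M)$ that does not actually occur. The crucial feature of this $k$-th expression is that $b_{k,\tau_k}>0$, while all $b_{k,\sigma}\geqslant 0$ and $\sum_{\sigma} b_{k,\sigma}=1$.

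Next I would average these $N$ expressions: $M=\frac1N\sum_{k=1}^N M=\sum_{\sigma\in P_n(M)}\Big(\frac1N\sum_{k=1}^N b_{k,\sigma}\Big)\sigma$. Setting $a_\sigma:=\frac1N\sum_{k=1}^N b_{k,\sigma}$, each $a_\sigma\geqslant 0$, and $\sum_{\sigma} a_\sigma=\frac1N\sum_{k}\sum_{\sigma} b_{k,\sigma}=\frac1N\cdot N=1$, so $M=\sum_{\sigma\in P_n(M)} a_\sigma\,\sigma$ is again a convex combination (the bound $a_\sigma\leqslant 1$ being automatic). The point is that for every $j$ one has $a_{\tau_j}\geqslant \frac1N\, b_{j,\tau_j}>0$, so every inner permutation appears with strictly positive weight, which is exactly the claim.

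There is essentially no obstacle here beyond bookkeeping: the only things to verify are that each expansion coming from \Cref{lemma-inner-always} is supported inside $P_n(M)$ — which the lemma guarantees, so the averaging stays within $\real P_n(M)$ — and that $N<\infty$, so the average is well defined. One could instead phrase the argument iteratively, as already indicated in the statement: start from any Birkhoff expansion of $M$ and, for each inner permutation still absent, use \Cref{lemma-inner-always} to splice it in with a small positive coefficient and renormalize. The one-shot averaging is cleaner, however, since it avoids tracking how the previously introduced coefficients shrink at each renormalization step.
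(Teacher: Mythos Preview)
Your proposal is correct and follows precisely the approach sketched in the paper's statement itself: apply \Cref{lemma-inner-always} once for each inner permutation and average the resulting convex expressions. The only addition you make is the explicit bookkeeping (padding with zeros, verifying the averaged coefficients sum to $1$ and are strictly positive where needed), which the paper leaves implicit.
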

Next, we would like to know along which permutations, the maxtrace for an \erdos{} matrix is attained.
For this, we will recall the following lemma from \cite[Equation (2.1)]{marcus1959diagonals} but specialized for \erdos{} matrices.
We reprove it here as our proof for \cref{theorem-skeleton-uniqueness} has a similar construction.
\begin{lemma} \label{lemma-inner-maxtrace}
    Let $E$ be an \erdos{} matrix. Suppose $\setbuilderin{\sigma_1, \ldots, \sigma_l}{}\subseteq P_n(E)$ be such that we can express
    $E = \sum_{1\leqslant i\leqslant l}{a_i\sigma_i};\quad
    \sum a_i=1, \quad
    0 < a_i \leqslant 1$, then
    \begin{equation}
        \maxtrace(E) = \tr[\sigma_i](E) \quad \forall\ i.
    \end{equation}
\end{lemma}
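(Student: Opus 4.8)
The plan is to exploit the bilinearity of the Frobenius inner product together with the defining equality $\maxtrace(E) = \Frobnorm{E}^2$; this is precisely the quantitative content underlying the Marcus--Ree inequality recalled above.

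First I would rewrite $\Frobnorm{E}^2$ using the given expansion $E = \sum_{i} a_i \sigma_i$. Since $\FrobIP{M}{N} = \tr(\tp{M}N)$ is bilinear,
\begin{equation*}
  \Frobnorm{E}^2 = \FrobIP{E}{E} = \FrobIP{E}{\textstyle\sum_{i} a_i \sigma_i} = \sum_{i} a_i \FrobIP{E}{\sigma_i} = \sum_{i} a_i \tr[\sigma_i](E),
\end{equation*}
where the last step is just the definition $\tr[\sigma_i](E) = \tr(\tp{\sigma_i}E) = \FrobIP{\sigma_i}{E}$.

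Next I would invoke the Erd\"os hypothesis $\Frobnorm{E}^2 = \maxtrace(E)$, which turns the above into $\maxtrace(E) = \sum_{i} a_i \tr[\sigma_i](E)$ with $\sum_i a_i = 1$ and every $a_i > 0$ --- that is, $\maxtrace(E)$ is expressed as a \emph{strict} convex combination of the inner traces $\tr[\sigma_i](E)$. Since $\tr[\sigma_i](E) \leqslant \maxtrace(E)$ for each $i$ by definition of the maxtrace, and a convex combination of numbers each $\leqslant \maxtrace(E)$ can equal $\maxtrace(E)$ only when every summand carrying positive weight already equals $\maxtrace(E)$, we conclude $\tr[\sigma_i](E) = \maxtrace(E)$ for all $i$.

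There is essentially no obstacle here: the only point meriting a word of care is that the decomposition $E = \sum a_i \sigma_i$ in the hypothesis need not be the linearly-independent Birkhoff expansion, but the computation above uses only bilinearity of $\FrobIP{\cdot}{\cdot}$ and hence is valid for any convex decomposition over $P_n(E)$. Combined with \Cref{lemma-inner-always}, this immediately yields that $\tr[\sigma](E) = \maxtrace(E)$ for \emph{every} $\sigma \in P_n(E)$, i.e. all inner traces of an Erd\"os matrix coincide with its maxtrace --- the RCDS property invoked later.
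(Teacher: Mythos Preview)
Your argument is correct and matches the paper's proof essentially line for line: both expand $\Frobnorm{E}^2 = \tr(\tp{E}E)$ along the given convex decomposition to obtain $\maxtrace(E) = \sum_i a_i \tr[\sigma_i](E)$, then conclude by the standard observation that a convex combination of quantities bounded above by $\maxtrace(E)$ can equal $\maxtrace(E)$ only if every positively-weighted term does. Your closing remark that combining this with \Cref{lemma-inner-always} yields the RCDS property is exactly the paper's subsequent \Cref{prop-maxtrace-everywhere}.
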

\begin{proof}
    We can express
    \begin{align}
        \maxtrace(E) = \Frobnorm{E}^2 = \tr(\tp{E}E) = \sum_{1\leqslant i\leqslant l}\tr{(a_i\tp{\sigma_i}E)} = \sum_{1\leqslant i\leqslant l}{a_i \tr[\sigma_i](E)}. \label{maxtrace-where}
    \end{align}
    Here, each $\tr[\sigma_i](E) \leqslant \maxtrace(E)$ and $\sum{a_i}=1$.
    So, the final expression in \eqref{maxtrace-where} is at most $\maxtrace(E)$. This maximum is attained if only if $\tr[\sigma_i](E) = \maxtrace(E)$ for all $i$.
\end{proof}

\begin{proposition} \label{prop-maxtrace-everywhere}
Suppose $E$ is \erdos{}.
$\maxtrace(E) = \tr[\sigma](E)$ $\forall$ $\sigma\in P_n(E)$.
So $E$ is an RCDS matrix.
\end{proposition}
This is easily seen by putting together \cref{lemma-inner-maxtrace,lemma-inner-always}.
\begin{corollary} \label{cor-inner-trace-is-frob}   
All the inner traces in an \erdos{} $E$ are the same and equal to $\Frobnorm{E}^2$. 
This result extends to RCDS bistochastic matrices as well, by the equality in \eqref{maxtrace-where} between $\|E\|_{\mathrm{F}}^2$ and the last sum $\sum\limits_{1\leqslant t\leqslant l}a_i\tr[\sigma_i](E)= \Big(\sum\limits_{1\leqslant i\leqslant l}a_i\Big)\tr[\sigma_1](E)= \tr[\sigma_i](E)$ for all $1\leqslant i\leqslant l$.    
\end{corollary}
In an \erdos{} matrix, note that an outer permutation may or may not yield the maxtrace.


\subsection{Proofs of \texorpdfstring{\cref{theorem-skeleton-uniqueness}; Lemmas \ref{Lemma rationality for RCDS}, \ref{rcds to erdos}}{main Theorem and lemma}} \label{mainproof}
We shall use $\FrobIP{.}{.}$ for the Frobenius inner product.
\begin{proof}[Proof of \cref{theorem-skeleton-uniqueness}]
 Part a) : Fix two \erdos{} matrices $E_1<E_2$. Let $\{\sigma_1,\ldots, \sigma_k\}\subseteq P_n(E_1)$ be a basis for $\real P_n(E_1)$ and by basis extension theorem, let $\{\sigma_{k+1}, \ldots, \sigma_m\}\subseteq  P_n(E_2)\setminus P_n(E_1)$ be a set of permutations such that $\{\sigma_1,\ldots, \sigma_m\}$ is a basis for $\real P_n(E_2)$.
  By \cref{cor-birkoff}, let $E_1= \sum_{i=1}^{k}c_i\sigma_i$ and $E_2=\sum_{j=1}^{m}d_j\sigma_j$ for some $c_i$ and $d_j\in \Q$. Next recall that $\tr[\sigma_j](E_1)\leqslant\maxtrace(E_1)=\Frobnorm{E_1}^2$ for $k<j\leqslant m$. Now using this and \cref{prop-maxtrace-everywhere}
  \begin{align*}
    \Frobnorm{E_1}^2&= \sum_{i=1}^k\FrobIP{c_i\sigma_i}{E_1}
    =\sum_{i=1}^{k}c_i\tr[\sigma_i](E_1) \\
    &\geqslant \sum_{j=1}^{m}d_j\tr[\sigma_j](E_1) \quad\quad \big(\text{as  $\tr[\sigma_1](E_1)= \ldots = \tr[\sigma_k](E_1)=\maxtrace(E_1)$ $\geqslant \tr[\sigma_{i}](E_1)$ for $k < i \leqslant m$}\big)\\
    &= \sum_{i=1}^{k}\sum_{j=1}^{m}c_id_j\FrobIP{\sigma_i}{\sigma_j}
    = \sum_{i=1}^{k} c_i\FrobIP{\sigma_i}{E_2}
    = \Frobnorm{E_2}^2\qquad \big(\text{as $E_2$ is \erdos{}}\big).
  \end{align*}
  To show the strict inequality, suppose on the contrary that $\maxtrace(E_1)=\maxtrace(E_2)$. 
  Then $\tr[\sigma](E_2)= \maxtrace (E_2)=\maxtrace(E_1)$ $\forall$ $\sigma\in P_n(E_2)$ by Proposition \ref{prop-maxtrace-everywhere}.
  We turn to computations using the Gram matrix $G = G(\sigma_1,\ldots, \sigma_m)= \big[\FrobIP{\sigma_i}{\sigma_j}\big]_{1\leqslant i,j\leqslant m}$, which is non-singular.
  Now solving for the $m\times 1$ vector $\vect{x}$ in the equation $G \vect{x} = \maxtrace(E)\allones{m}{1}$ leads to the two solutions $\vect{x}= \tp{(d_1,\ldots, d_m)}$ and $\vect{x} = \tp{(c_1,\ldots, c_k, 0,\ldots, 0)}$ (see \cite[Algorithm 1]{tripathi2025erdos}).
  The non-equality of these two solutions follows from the non-equality of the skeletons of $E_1, E_2$, which contradicts the non-singularity of $G(\sigma_1,\ldots, \sigma_m)$.
\smallskip\\
 Part b) : Let $\{\sigma_1,\ldots, \sigma_m\}\subseteq P_n(E_1)$ be a basis for $\real P_n(E_1)=\real P_n(E_2)$, $E_1=\sum c_i\sigma_i$, $E_2=\sum d_i\sigma_i$ and $G=\big[\FrobIP{\sigma_i}{\sigma_j}\big]_{1\leqslant i,j\leqslant m}$. By repeated use of \cref{prop-maxtrace-everywhere} and \Cref{cor-inner-trace-is-frob}, we see that
  \begin{equation}
    \begin{aligned}\nn
  &      \Frobnorm{E_1}^2 = \tr(\tp{E_1}E_1) = \sum c_i \langle \sigma_i, E_1 \rangle_{\mathrm{F}} =  \sum c_i\tr[\sigma_i](E_1)=\sum c_i \|E_1\|_{\mathrm{F}}^2 = \sum d_i \|E_1\|_{\mathrm{F}}^2 = \sum d_i\tr[\sigma_i](E_1) \hspace*{0.5cm} \\ & \hspace*{0.5cm} 
        = \sum d_i \langle  \sigma_i, E_1\rangle_{\mathrm{F}} =  \tr(\tp{E_2}E_1)  = \tr(\tp{E_1}E_2) = \sum c_i\langle \sigma_i, E_2  \rangle_{\mathrm{F}}= \sum c_i \mathrm{tr}_{\sigma_i}(E_2)= \sum c_i \|E_2\|_{\mathrm{F}}^2= \|E_2\|_{\mathrm{F}}^2.
    \end{aligned}
  \end{equation}
  We observe, both $\tp{(d_1,\ldots, d_m)}$ and $\tp{(c_1,\ldots, c_m)}$ to satisfy $G\vect{x}=\Frobnorm{E_1}^2\allones{m}{1}$. 
  The non-singularity of a Gram matrix for linearly independent $\{\sigma_1,\ldots, \sigma_m\}$ forces $(c_1,\ldots, c_m)=(d_1,\ldots, d_m)$ and so $E_1=E_2$.
\end{proof}
\noindent{The above proof also works for RCDS matrices and can be an alternative proof for \cite[Corollary 1.3]{Brualdi2021}.}
\begin{remark}
    The maxtrace (also squared Frobenius norm) map on the subset of \erdos{} matrices in $\Omega_n$ attains its minimum of $1$ only at $J_n$, and similarly, its maximum of $n$ only at every $\sigma\in P_n$. Notably, if \erdos{} $E$ has a zero entry, i.e. $\skel(E) < \allones{n}{n}$,  necessarily $\tr[\sigma](E)< \maxtrace(E)$ for some outer  $\sigma\in P_n\setminus P_n(E)$. 
\end{remark}
\begin{proof}[Proof of Lemma \ref{Lemma rationality for RCDS}]
Let $H$ be the matrix of coefficients in the system of linear equations in \eqref{Brualdi-Dahl system} for computing $\vect{u}, \vect{v}$.
Note that $H$ has integer entries.
 It is stated in \cite[Proof of Theorem 2.6]{Brualdi2021}, that $H$ is a sign-less Laplacian matrix, with 1-dimensional null-space spanned by $(\allones{n}{1}, \ - \allones{n}{1})$.
 In particular as $H$ is Hermitian, the image subspace $H\mathbb{R}^{2n}$ is the orthogonal compliment of $\mathbb{R}(\allones{n}{1}, \ - \allones{n}{1})$, which is $2n-1$ dimensional.
 Indeed (as rank of $H$ is fixed across working over $\mathbb{Q}\subset \mathbb{R}$), the same assertion is true when we work over rationals $\mathbb{Q}$, i.e. $H\mathbb{Q}^{2n}$ is the orthogonal compliment of $\mathbb{Q}(\allones{n}{1}, \ - \allones{n}{1})$ inside $\mathbb{Q}^{2n}$; with respect to the restriction of the usual Euclidean inner product.
 Finally as $\allones{2n}{1}$ lies in that complementary subspace, we can have a rational solution $({\bf u}, {\bf v})$ to the desired system.
\end{proof}

\begin{proof}[Proof of \cref{rcds to erdos}]
Fix an RCDS bistochastic $E= \big[({\bf u}_i+{\bf v}_j)\times \skel(E)_{i,j}\big]_{1\leqslant i,j\leqslant n}$ for some real vectors ${\bf u}, {\bf v}$.
For any inner $\sigma\in P_n(E)$, observe that $\tr[\sigma](E)=\|E\|_{\mathrm{F}}^2=\sum\limits_{i=1}^n ({\bf u}_i+ {\bf v}_{\sigma(i)})= \sum\limits_{i=1}^n {\bf u}_i + \sum\limits_{i=1}^n {\bf v}_{\sigma(i)}= \sum\limits_{i=1}^{n}({\bf u}_i+{\bf v}_i)$.
For an outer $\tau\in P_n\setminus P_n(E)$, observe that $\tr[\tau](E) = \sum\limits_{1\leqslant i\leqslant n:\ E_{i, \tau(i)}>0} {\bf u}_i+{\bf v}_{\tau(i)}
= \|E\|_{\mathrm{F}}^2 - \sum\limits_{1\leqslant i\leqslant n:\ E_{i, \tau(i)}=0} {\bf u}_i+{\bf v}_{\tau(i)} 
$.
If that second sum is  non negative for every outer $\tau$, then $\tr[\tau](E)\leqslant\|E\|_{\mathrm{F}}^2=\maxtrace(E)$ as required.
\end{proof}

\section{An algorithm to find all \erdos{} matrices} \label{sec-algo}
We develop skeleton-based Algorithm-1 to find all $n\times n$ \erdos{} matrices, generalizing \cite[Algorithm 1]{tripathi2025erdos}.
An implementation of it is available at \cite{github}.
By \cref{theorem-skeleton-uniqueness}b, there can be at most one \erdos{}
\noindent matrix  with a given skeleton. 
As we are only interested in their equivalence classes up to transpositions and left/right multiplication by permutations, we start by finding the representative skeletons as follows:

\subsection{Algorithm-1}\label{Subsection Algorithm}

{\bf Step 1:} Prepare a list of all $n\times n$ skeletons called $B_n$. There are $2^{n^2}$ of them. We will sieve through it and prepare a shorter list $S_{repr}$. For each matrix $B$ from the list $B_n$, we first append it to $S_{repr}$. Then compute all the matrices $LBR$ and $L\tp{B}R$ for $L, R\in P_n$ and remove them from $B_n$. Repeat this process until we have exhausted all the entries in $B_n$ and completely populated $S_{repr}$ with equivalence classes.

\noindent{\bf Step 2:} Next, we shall prune the list $S_{repr}$ for skeletons with total support(defined below Example \ref{Example skeletons}). 
\begin{equation}
   S_{ts} := \setbuilder[\Big]{S\in S_{repr}\setminus(0)_{n\times n}}
    {\skel\Big(\sum\nolimits_{\sigma\in P_n(S)} \sigma\Big) = S}.
\end{equation}

\noindent {\bf Step 3:} Now for a given skeleton $S\in S_{ts}$, within the set of permutation matrices $P_n(S)$,
we find a maximal linearly independent subset of permutations $B_S$ for $S$. This basis spans the linear space $\mathbb{R}P_n(S)$, but we focus only on sums $\sum_{\sigma_i\in B_S} c_i \sigma_i$ for $\sum c_i = 1$. Note that we shall allow individual $c_i < 0$.

The next step will be akin to Algorithm 1 in \cite{kushwaha2025note}

\noindent {\bf Step 4:}
For our basis set $B_S = \{\sigma_1, \ldots, \sigma_l\}$, we compute the gram matrix $M$ given by $M_{ij} = \FrobIP{\sigma_i}{\sigma_j}$. We then solve for the $l\times1$ column vector $\mathbf{y}$ satisfying $M\mathbf{y}=\allones{l}{1}$. We normalize this to
$\mathbf{x} = \mathbf{y}/\sum{y_i}$. Now we compute $E_S = \sum{x_i\sigma_i}$.

\noindent {\bf Step 5:} We check for the \erdos-ness of $E_S$ obtained in the previous step. Namely, whether:

\begin{itemize}
    \item $E_S$ has no negative entries.
    \item $\maxtrace(E_S) = \Frobnorm{E_S}^2$.
\end{itemize} 
The simplest way to find the maxtrace is to compute all the $n!$ traces.
On the other hand, one could use Hungarian algorithm which efficiently solves the {\it assignment problem} in $O(n^3)$ steps/time.
Notably, Step 4 already ensures that all the inner traces of $E_S$ are the same and equal to the squared Frobenius norm $\Frobnorm{E_s}^2$.
Finally, we gather all those $E_S$ that have the above properties, while also discarding the cases of $\skel(E_S) \neq  S$ (which happens for \cite[Example 1.5]{Brualdi2021}) to avoid duplication.

\subsection{Features and applications of Algorithm 1} \label{Subsection applications}
\begin{remark}[Better bounds]\label{Remark comparing Algorithms}
  Theoretically, in each dimension $n$,   \cite[Algorithm 1]{tripathi2025erdos} iterates through all the subsets of $P_n$ of size $ \leqslant\dim\Omega_n+1$ ($=(n-1)^2+1$). Their count is  $\sum\nolimits_{2\leqslant j\leqslant (n-1)^2+1 }\binom{n!}{j}$ which is essentially more than the order of $(n!)$.
  We improve on this as the count of $n\times n$ skeletons is $2^{n^2}$. 
\end{remark}
\begin{remark}[Fixing an initial permutation] \label{favourite-id}
We can fix a chosen permutation matrix $\sigma_0$ and we know that for any \erdos{} matrix $E$, at least one of the equivalent representatives of $E$ contains $\sigma_0$ in its skeleton, i.e., there exists $L, R\in P_n$ such that $\sigma_0 \in P_n(LER)\cup P_n(L\tp{E}R)$.
We can use this fact to speed up Step 1 in the above algorithm by starting with the list of binary matrices that contain $\sigma_0$. As we are fixing $n$ positions to be 1, the list now has its size $=2^{n^2-n}$ instead.
\end{remark}

\begin{remark}[Non-positive solutions $\bf{y}$] In contrast to \cite{kushwaha2025note}, in Step 4 we do not discard the cases where $\mathbf{y}$ has negative entries. This is because the even if $\mathbf{y}\leqslant 0$, the resultant matrix $E_S$ can still be bistochastic.
We only discard the cases when some entry of $E_S$ becomes negative in the subsequent step.
\end{remark}

\begin{remark}[Basis choice]
It suffices to choose any fixed basis in Step 3. 
If there existed an \erdos{} matrix $E_S$ for a given skeleton $S$,  then it is necessarily a sum $E_S = \sum c_i \sigma_i$ over the basis $\{\sigma_1, \ldots, \sigma_l\} = B_S$  fixed in Step 3.
The vector $\mathbf{x} = (c_i)_{l\times 1}$ is a solution for $M\mathbf{x} = m\allones{l}{1}$ in Step 4 for some quantity $m>0$.
\end{remark}

\begin{remark}[Algorithm selection] The methods of \cite{Brualdi2021} might be more efficient than the one in \cite{tripathi2025erdos} our Step 4 used. 
But, for its proper implementation, we need to break down decomposable matrices into indecomposable blocks and combine them again later. 
So, we followed \cite{tripathi2025erdos} for the sake of simplicity.
\end{remark}

\begin{remark}[Some statistics about non-examples in dimension $n=5$]\label{Remark dim 5}
Step 1 evaluates 3014 equivalence classes of binary matrices (or 1764 in view of  \cref{favourite-id}). This gets pruned down to 534 skeletons with total support after Step 2. From these only 469 distinct \erdos{} matrices are obtained.

In 9 classes, the matrix $E_S$ obtained in Step 4 has a strictly smaller skeleton than $S$. This happens whenever an outer trace is equal to the inner traces and square of the Frobenius norm. Then the skeleton created by the inclusion of those outer permutations would yield the same result as the smaller skeleton.
\end{remark}

\begin{wrapfigure}{r}{0.4\textwidth}
    \vspace{-20pt}
    \includegraphics[scale=0.58]{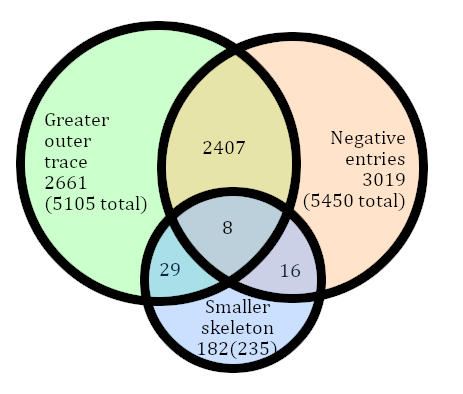}
    \vspace*{-20pt}
      \caption{Counts for $6\times 6$ skeletons each of which lack an \erdos{} matrix by reason.}\label{Fig 6x6}
       \vspace{-10pt}
\end{wrapfigure}

In 33 classes, the matrix $E_S$ has at least one negative entry and so it is not bistochastic. 
In 31 classes, the maxtrace is attained along an outer permutation, and it exceeds all the inner traces and hence the squared-Frobenius norm of $E_S$. 
In the intersection, we see 6 classes of $E_S$'s with negative entries as well as their maxtraces exceeding the inner traces.

The corresponding numbers for $6\times 6$ matrices are summarized in the adjacent venn diagram in \cref{Fig 6x6}.

\begin{remark}[On simplicial skeletons]
A big chunk of these non-example $E_S$'s seem to occur in simplicial faces of $\Omega_n$
(i.e. when $P_n(S)$ is a linearly independent set).  
For $n=5$, out of the 65 non-examples, 33 are simplicial. In comparison, only 83 of the 534 totally supported skeletons are simplicial. 
See \cite{brualdi1977-1} for a thorough study and characterization of skeletons of simplicial faces, and \cite{Brualdi2021} for simplicial RCDS skeletons.
Using those complete characterizations, it might be worth investigating in higher dimensions the afore-stated phenomenon (and possible explanations, even speculative).
\end{remark}

\begin{remark}[Monte Carlo simulations for $n\geqslant 7$]
We run Algorithm-1 for a random sample of $10000$ skeletons, instead of the whole list from Step 1:
For $n=7$, totally supported skeletons are $4377$ out of which $2758$ are \erdos{} ($63.2\%$). 
For $n=8$, totally supported skeletons are $5713$, with 3476 \erdos{} matrices ($60.8\%$).
\end{remark}

\subsection{Maximum number of distinct entries and denominator} \label{Subsection erdos denominators}
We investigate some interesting subsets of \erdos{} matrices $E$, motivated by their resemblance to {\it magic squares}. For this we look at those $E$'s with all their nonzero entries distinct and with the minimum number of zeros. We shall call this set by $\mathcal{E}^D_n$. There are no such examples up to $n\leqslant 4$. There is only one class of \erdos{} matrices in $\mathcal{E}^D_5$ and 6 classes in $\mathcal{E}^D_6$.

\begin{example} \label{max-den}
Below, $E_5^D$ and $E_{6, 1}^D$ have 18 and 27 distinct nonzero entries respectively.
\begin{equation*}
    E_{5}^{D} = \frac{1}{10226}
    \left(\begin{smallmatrix}
        1028 &\  1947 &\ 3087 &\ 2832 &\ 1332\smallskip\\ 
           0 &\  2204 &\ 3344 &\ 3089 &\ 1589\smallskip\\
        1736 &\  2655 &\ 3795 &\    0 &\ 2040\smallskip\\
        2501 &\  3420 &\    0 &\ 4305 &\    0\smallskip\\
        4961 &\     0 &\    0 &\    0 &\ 5265
    \end{smallmatrix}\right), \ \ 
    E_{6, 1}^{D} = \frac{1}{1499473}
    \left(\begin{smallmatrix}
       280460 &\ 227012 &\ 194276 &\ 321380 &\ 298700 &\ 177645 \smallskip\\
            0 &\ 345696 &\      0 &\ 440064 &\ 417384 &\ 296329 \smallskip\\
       315989 &\ 262541 &\ 229805 &\ 356909 &\ 334229 &\      0 \smallskip\\
       340200 &\ 286752 &\ 254016 &\ 381120 &\      0 &\ 237385 \smallskip\\
            0 &\ 377472 &\ 344736 &\      0 &\ 449160 &\ 328105 \smallskip\\
       562824 &\      0 &\ 476640 &\      0 &\      0 &\ 460009
    \end{smallmatrix}\right).
\end{equation*}
\end{example}

\begin{remark}
In these two dimensions, matrices in $\mathcal{E}^D_n$ turn-out to have the maximum number of distinct entries overall among all \erdos{} matrices of their size. 
Their equivalence classes also attain the full size of $2(n!)^2$; suggestive of high asymmetry. We expect this to hold for higher values of $n$ as well.
It might be interesting to verify (or prove) both the phenomenon for $n\geqslant 7$.
\end{remark}

The two matrices in \cref{max-den} also have the largest denominator among all the \erdos{} matrices (of their orders) when expressed in simplest terms.
This leads us to the next subset of our interest, which we shall denote by $\mathcal{E}^{\max}_n$ of $E$'s that have the largest denominator. Up to dimensions $n\leqslant 6$, these are:\\
$J_2$,\ 
$R=\frac{1}{5}\left(\begin{smallmatrix}3&2&0\\2&1&2\\0&2&3\end{smallmatrix}\right)$\cite[Theorem 4.1]{bouthat2024question},
$\frac{1}{43}\left(\begin{smallmatrix}2&7&15&19\\7&12&0&24\\15&0&28&0\\19&24&0&0\end{smallmatrix}\right)$, and $E_5^D, \ E_{6,1}^D$ from \cref{max-den}.
Their denominators form the sequence:\quad
$2, 5, 43, 10266, 1499473, \ldots$

We believe the matrices in $\mathcal{E}^{\max}_n$ for $n\leqslant 6$ require the most number of permutations in their convex expression \eqref{birkhoff--skeleton-convex-sum} when using minimal number of permutations with nonzero coefficients.
\begin{question}
 In view of above observations, we would like to ask the following natural questions for each $n\geqslant7$:
 1) Is $\mathcal{E}^{\max}_n\subseteq \mathcal{E}^{D}_n$?
 2) How does the above denominator-sequence grow?
\end{question}

\section{Proof of Theorem \ref{Theorem Erdos-ness of 3 families of RCDS}\label{Section proof for RCDS patterns}: \erdos-ness of RCDS-families in \texorpdfstring{\cite{Brualdi2021}}{Brualdi--Dahl}}
Recall the definitions of the RCDS matrix classes in points 1.--3. in \hyperlink{zigzag}{Introduction}.
We begin with
\begin{remark}\label{Remark Brualdi's row size mismatch}
 \cite{Brualdi2021} specifies the number of rows in each of the blocks in $X^{(\vect{s}, \vect{r})}$ containing $\alpha_{j, j}$ and $\alpha_{j, j+1}$ to be $r_{2j-1}$ and resp. $r_{2j}$, but does not comment on $r_{2j-1}\overset{{\bf ?}}{=} r_{2j}$; the same question arises along the columns as well.
If $r_{2j-1}\neq  r_{2j}$ then the block structure of $X$ is perhaps unclear.
Further, even if one fixes $r_{2j-1}=r_{2j}$, $s_{2j}=s_{2j+1}$ $\forall\ j$, the inequalities in the hypothesis \cite[(Eq. 20)]{Brualdi2021} do not guarantee the positivity of $\alpha_{i, j}$'s.
(And $\alpha_{i,j}$ values were not even computed in that paper).
We have made these clear in \eqref{brualdi family 3} (and believe this to be the actual intention of \cite{Brualdi2021}) and the formulas for $\alpha_{i, j}$'s are as follows.
Observe $\alpha_{1, 1} = \frac{1}{r_1},\ \ \alpha_{1, 2}=\frac{r_1-s_1}{r_1\, s_2}, \ \ldots$
\begin{equation}
\alpha_{i, i}=\frac{s_1+\cdots+s_i - r_1-\cdots-r_{i-1}}{r_i\,s_i},\ \ 
\alpha_{i, i+1}=\frac{r_1+\cdots+r_i- s_1-\cdots-s_i}{r_i\,s_{i+1}},\ \ldots
\end{equation}
\end{remark}

\begin{remark} \label{uv expression for Xrs}
This matrix can also be expressed like in \eqref{uv expression}. We first note that by symmetry, all the ${\bf u}_i$'s (and ${\bf v}_j$'s) for a particular block are the same. So we create the vectors $\vect\smolu$ and $\vect\smolv$ by contracting the indices to refer to all the rows and columns of a corresponding block (e.g. $\smolu_1=u_1=\ldots=u_{r_1}$, and so on). With this $\alpha_{i, j}=\smolu_i+\smolv_j$ $\forall$ $1\leqslant i\leqslant j \leqslant i+1\leqslant k+1$. Solving it for other values of $i, j$, yields $\smolu_i + \smolv_j = \sum_{t=i}^{j-2}(\alpha_{t, t+1} - \alpha_{t+1, t+1}) +\alpha_{j-1, j}$ if $j > i+1$ and $\smolu_i + \smolv_j = \sum_{t=j}^{i-1}(\alpha_{t, t} - \alpha_{t, t+1}) +\alpha_{i, i}$ if $j < i$.
\end{remark}

We are now ready to present the proof.

\begin{proof} [Proof of \cref{Theorem Erdos-ness of 3 families of RCDS}]
We know that $X$ in the three families in the theorem have RCDS property.
Thus by \cref{rcds to erdos}, it suffices to show that $X$ when expressed like in \eqref{uv expression}, the value $\smolu_i+\smolv_j \geqslant 0$ for all $i, j$.
\smallskip \\
1. For $X=X^{\big((r_1,r_2),\,(s_1,s_2,s_3)\big)}$ :
So $X=\left(\begin{array}{@{}c|c@{}|c@{}}
    \frac{1}{r_1} \allones{r_1}{s_1} & \frac{r_1-s_1}{r_1s_2} \allones{r_1}{s_2} & {\bf 0}_{r_1 \times s_3} \\ \hline
  {\bf 0}_{r_2\times s_1}   & \frac{s_1+s_2-r_1}{s_2r_2} \allones{r_2}{s_2} & \frac{1}{r_2} \allones{r_2}{s_3}  
\end{array}\right)_{n\times n}$ with $s_1<r_1<s_1+s_2\leqslant r_1+r_2=s_1+s_2+s_3=n$. Next, we can find the vectors $(\smolu_1, \smolu_2)$ and $(\smolv_1, \smolv_2, \smolv_3)$ as discussed in \cref{uv expression for Xrs}. We only need to verify that $\smolu_1 + \smolv_3$ and $\smolu_2 + \smolv_1$ are non-negative and the rest follows from \cref{rcds to erdos}. Indeed, we find
\begin{align*}
    \smolu_1 + \smolv_3 = \alpha_{1, 2} - \alpha_{2, 2} + \alpha_{2, 3} &= n(r_1-s_1)/(r_1r_2s_2)>0,\\
    \smolu_2 + \smolv_1 = \alpha_{1, 1} - \alpha_{1, 2} + \alpha_{2, 2} &= n(s_1+s_2 - r_1)/(r_1r_2s_2)>0.
\end{align*}

\noindent{For $X=X^{(r, s, n)}$} : This case follows from the previous case when we set $s_3=0$.\smallskip

\noindent{For $X=X^{\vect{\alpha}}$} : The following values for the vectors $\vect{u}$ and $\vect{v}$ can be used for \eqref{uv expression} type expansion of $X$:
\begin{align*} \nn
  \smolu_i = \frac{1}{8(\alpha_1\alpha_4+\alpha_2\alpha_3)}\times\begin{cases}
      -\ \alpha_1-\alpha_2+3\alpha_3+3\alpha_4 &:1\leqslant i\leqslant p, \\
      \ 3\alpha_1+3\alpha_2-\ \alpha_3-\ \alpha_4 &:p< i\leqslant 2p.
  \end{cases}\\ 
   \smolv_j = \frac{1}{8(\alpha_1\alpha_4+\alpha_2\alpha_3)}\times\begin{cases}
      -\ \alpha_1+3\alpha_2-\alpha_3+3\alpha_4 &:1\leqslant j\leqslant p, \\
      \ 3\alpha_1-\ \alpha_2+3\alpha_3-\ \alpha_4 &:p< j\leqslant 2p.
  \end{cases}
\end{align*}
Any $\smolu_i+ \smolv_j=\alpha_{l}/(\alpha_1\alpha_4+\alpha_2\alpha_3)>0$ for some $l\in\{1, 2, 3, 4\}$. The proof then follows from \cref{rcds to erdos}.\end{proof}


\begin{remark} \label{type 3 counter}
  The RCDS family $X^{(\vect{s}, \vect{r})}$ contains non-\erdos{} matrices for value of $k \geqslant 3$. As a counter example, consider $X^{((4, 2, 4),\,(3, 2, 5))}$ with
  $\vect{\alpha} = (\frac{1}{4}, \frac{1}{8}, \frac{1}{4}, \frac{1}{10}, \frac{1}{5})$. This matrix is RCDS with all the inner traces equal to $81/40$, but the maxtrace is $82/40$ and so it is not \erdos{}.
\end{remark}
\subsection*{Acknowledgements}
 We deeply thank R. Tripathi and A. Kushwaha for their discussions and feedbacks on this work; and also for suggesting us some references. 
We thank A. Iyyer for sharing with us projects on estimating equivalence classes of binary matrices; and M. Krishnapur for some fruitful discussions.
P. Karmakar thanks ICTS for the excellent research facilities that made this work possible. 
Karmakar also acknowledges the support of the Department of Atomic Energy, Government of India, under project no. RTI4019.
This work was initiated when Souvik Pal was an NBHM Postdoc. (fellowship Ref. No. 0204/9/2024/R\& D-II/2965) at IISc Bangalore. 
G. Krishna Teja acknowledges his NBHM Postdoc. fellowship (Ref. No. 0204/16(8)/2022/R\&D-II/11979), and ISI for the facilities.
We immensely thank the two anonymous referees for their constructive feedback, and clarifications of some concepts, all of which well-improved the exposition of the paper. 
We also thank A. Khare for exposing us to this line of work and for encouragement.

\bibliographystyle{alphaabbr}

\bibliography{ref}
\bigskip

\newcommand{\author}[1]{\def\curauthor{#1}(#1)}
\newcommand{\address}[1]{\textsc{#1}}
\newcommand{\email}[1]{\href{mailto:"\curauthor"<#1>}{\texttt{#1}}}
\ExplSyntaxOn
\NewDocumentCommand{\emails}{>{\SplitList,}m}{\textit{Email: }\def\tempcomma{\ }
\tl_map_inline:nn{#1}{\tempcomma\email{##1}\def\tempcomma{,\ }}.
\smallskip}\ExplSyntaxOff
  \noindent
\author{Priyanka Karmakar}
  \address{ICTS-TIFR, Bangalore 560 089, India.}\\
  \emails{priyankakarmakar.2196@gmail.com, priyanka.karmakar@icts.res.in}\\
\author{Hariram Krishna}
  \address{Department of Mathematics, IISc, Bangalore 560 012, India.}\\
  \emails{hariramk@iisc.ac.in, harirarn@gmail.com}\\
\author{Souvik Pal}
  \address{Department of Sciences and Humanities, Christ University, Bangalore 560 074, India.}
  \emails{pal.souvik90@gmail.com, souvik.pal@christuniversity.in}\\
\author{G. Krishna Teja}
  \address{Stat. Math. Unit, Indian Statistical Institute Bangalore Center, Bangalore, 560 059, India.}
  \emails{tejag@alum.iisc.ac.in, tejag\_pd@isibang.ac.in}

\end{document}